\newcommand{\la}{\lambda}
\newcommand{\diag}[1]{\mbox{${\rm diag}(#1)$}}
\newcommand{\hide}[1]{}
\definecolor{myred}{cmyk}{0.000000,1.000000,1.000000,0.1}
\definecolor{myblue}{cmyk}{1.000000,0.750000,0.000000,0.1}
\begin{document}

\title*{Linear System Matrices of Rational Transfer Functions}
\author{Froil\'an Dopico, Mar\'ia del Carmen Quintana and Paul Van Dooren}
\institute{Froil\'an Dopico \at Universidad Carlos III de Madrid, \email{dopico@math.uc3m.es}
\and Mar\'ia del Carmen Quintana \at Universidad Carlos III de Madrid, \email{maquinta@math.uc3m.es}
\and Paul Van Dooren \at Universit\'e catholique de Louvain, \email{paul.vandooren@uclouvain.be}}
%
%
\maketitle

\abstract{ In this paper we derive new sufficient conditions for a linear system matrix 
	$$S(\lambda):=\begin{bmatrix} T(\lambda) \;&\; -U(\lambda) \\ V(\lambda) \;&\; W(\lambda) 
	\end{bmatrix},$$ where $T(\lambda)$ is assumed regular, to be strongly irreducible. In particular, we introduce the notion of strong minimality, and the corresponding conditions are shown to be sufficient for a polynomial system matrix to be strongly minimal. A strongly irreducible or minimal system matrix has the same structural elements as the rational matrix $$R(\lambda):= W(\lambda) + V(\lambda)T(\lambda)^{-1}U(\lambda),$$ which is also known as the transfer function connected to the system matrix $S(\lambda)$. The pole structure, zero structure and null space structure of $R(\lambda)$ can be then computed with the staircase algorithm and the $QZ$ algorithm applied to pencils derived from $S(\lambda)$. 
	We also show how to derive a strongly minimal system matrix from an arbitrary linear system matrix by applying to it a reduction procedure, that only uses unitary equivalence transformations. This implies that numerical errors performed during the reduction procedure remain bounded. Since we use unitary transformations in both the reduction procedure and the computation of the eigenstructure, this guarantees that we computed the exact eigenstructure of a perturbed linear system matrix, but where the perturbation is of the order of the machine precision. }

\section{Introduction}
\label{sec:Introduction}

Already in the seventies, Rosenbrock \cite{Ros70} introduced the concept of a polynomial system matrix
\begin{equation}  \label{Ros}
	S(\la):=\left[\begin{array}{ccc} T(\la) \;&\; -U(\la) \\ V(\la) \;&\; W(\la)  \end{array}\right],
\end{equation}
where $T(\la)$ is assumed to be regular. He showed that the finite pole and zero structure 
of its transfer function matrix $R(\la)= W(\la) + V(\la)T(\la)^{-1}U(\la)$ can be retrieved from the polynomial matrices $T(\la)$ and $S(\la)$, respectively, provided it is {\em irreducible} or {\em minimal}, meaning that the matrices
\begin{equation}  \label{minimal} 
	\left[\begin{array}{ccc} T(\la) \;&\; -U(\la)  \end{array}\right], \quad \left[\begin{array}{ccc} T(\la) \\ V(\la)   \end{array}\right],
\end{equation}
have, respectively, full row and column rank for all finite $\la$. This was already well known for state-space models of a proper transfer function $R_p(\la)$,
where the system matrix takes the special form
$$S_p(\la):=\left[\begin{array}{ccc} \la I-A \;&\; -B \\ C \;&\; D \end{array}\right]$$ 
where $(A,B)$ is controllable and $(A,C)$ is observable, meaning that $S_p(\la)$ is minimal. That is, $\left[\begin{array}{ccc} \la I-A \;&\; -B \end{array}\right]$ and $ \left[\begin{array}{ccc} \la I-A \\ C \end{array}\right]$ both satisfy the conditions in \eqref{minimal}, respectively. The poles of such a proper transfer function are all finite and are the eigenvalues of $A$, while the finite zeros are the finite generalized eigenvalues of the pencil  $S_p(\la)$. The main advantage of using state-space models is that there are algorithms to 
compute the eigenstructure using unitary transformations only. There are also algorithms available to derive 
a minimal state-space model from a non-minimal one, and these algorithms are also based on unitary transformations only 
\cite{Van81}. 

\medskip

When allowing generalized state space models, then all transfer functions can be realized by a system matrix of the type
\begin{equation} \label{gss}
	S_g(\la):=\left[\begin{array}{ccc} \la E-A \;&\; -B \\ C \;&\; D \end{array}\right],
\end{equation} 
since the matrix $E$ is allowed to be singular. Moreover, when the pencils
\begin{equation}  \label{gssminimal}
	\left[\begin{array}{ccc} \la E-A \;&\; -B \end{array}\right], \left[\begin{array}{ccc} \la E-A \\ C \end{array}\right],
\end{equation}
have, respectively, full row rank and column rank for all finite $\la$, then we retrieve the irreducibility or minimality conditions 
of Rosenbrock in \eqref{minimal}, which imply that the finite poles of $R(\la):=D+C(\la E -A)^{-1}B$ are the finite eigenvalues of $\la E-A$ and the finite zeros of $R(\la)$ are the finite zeros of $S_g(\la)$. It was shown in \cite{VVK79} that when imposing also the conditions that the pencil in \eqref{gss} is {\em strongly} irreducible, meaning that the matrices in \eqref{gssminimal} have full row rank for all finite	and infinite $\lambda$, then also the infinite pole and zero structure of $R(\la)$ can be retrieved from the infinite structure of $\la E-A$ and $S_g(\la)$, respectively, and that the left and right minimal indices of $R(\la)$ and $S_g(\la)$ are also the same. Moreover, a reduction procedure to derive a strongly irreducible generalized state-space model from a reducible one
was also given in \cite{Van81}, and it is also based on unitary transformations only.

\medskip

In \cite{Ver80} these results were then extended
to arbitrary polynomial models, but the procedure required irreducibility tests that were more involved. In this paper we will show that these conditions can again be simplified (and also made more uniform) when the system matrix is linear, i.e.,
\begin{equation}  \label{pencil}
	S(\la):=\left[\begin{array}{ccc} A(\la) \;&\; -B(\la) \\ C(\la) \;&\; D(\la) 
	\end{array}\right]:=\left[\begin{array}{ccc} \la A_1 -A_0 \;&\; B_0- \la B_1 \\ \la C_1-C_0 \;&\; \la D_1-D_0  \end{array}\right].
\end{equation} 
We will define the notion of strongly minimal polynomial system matrix, and we will prove that the strong minimality conditions imply the strong irreducibility conditions in \cite{Ver80}. We remark that, although the notions of irreducible or minimal polynomial system matrix refer to the same conditions in \eqref{minimal}, the conditions for a polynomial system matrix to be strongly irreducible or strongly minimal are different in general.
We will also show that when the strong minimality conditions are not satisfied, we can reduce the system matrix to one where they are satisfied, and this without modifying the transfer function. Such a procedure was already derived in \cite{VanD83}, but only for linear system matrices that were already minimal at finite points. In this paper we thus extend this to arbitrary linear system matrices.

In the next Section we briefly recall the background material for this paper and introduce the basic notation. In Section \ref{Minimality} we also recall the definition of strongly irreducible polynomial system matrix in \cite{Ver80}, and we introduce the notion of strong minimality. In addition, we establish the relation between them. We then give, in Section \ref{Reducing}, an algorithm to construct a strongly minimal linear system matrix from an arbitrary one, and we discuss the computational aspects in Section \ref{Computation}. Finally, we end with some numerical experiments in Section \ref{experiments} and some concluding remarks in Section \ref{Conclusion}. 

\section{Background} \label{Background}

We will restrict ourselves here to polynomial and rational matrices with coefficients in the field of complex numbers $\mathbb{C}$. The set of $m\times n$ polynomial matrices, denoted by $\mathbb{C}[\la]^{m\times n}$ and the set of  $m\times n$ rational matrices, denoted by  $\mathbb{C}(\la)^{m\times n}$,
can both be viewed as matrices over the field of rational functions with complex coefficients, denoted by $\mathbb{C}(\la)$. 

Every rational matrix can have poles and zeros and has a right and a left null space (these can be trivial, i.e., equal to $\{0\}$). Via the local Smith-McMillan form, one can associate 
structural indices to the poles and zeros, and via the notion of minimal polynomial bases for rational vector spaces, one can associate 
so called right and left minimal indices to the right and left null spaces. We briefly recall here these different types of indices. Since we assumed (for simplicity)
that the coefficients of the rational matrix are in $\mathbb{C}$, the poles and zeros are in the same set. 

\begin{definition} A square rational matrix $M(\la)\in \mathbb{C}(\la)^{m\times m}$ is said to be regular at a point $\la_0\in \mathbb{C}$ if the matrix 
	$M(\la_0)$ is bounded (i.e., $M(\la_0)\in \mathbb{C}^{m\times m}$) and is invertible. This is equivalent to that both rational matrices $M(\la)$ and $M(\la)^{-1}$ having a convergent Taylor expansion around
	the point $\la=\la_0$. Namely,
	\begin{eqnarray*}
		M(\la)& := & M_{0}+(\la-\la_0)M_{1}+(\la-\la_0)^2M_2+(\la-\la_0)^3M_3+\cdots,\\
		M(\la)^{-1} & := & M_{0}^{-1}+(\la-\la_0)H_{1}+(\la-\la_0)^2H_2+(\la-\la_0)^3H_3+\cdots.
	\end{eqnarray*} If $\la=\infty$, $M(\la)$ is said to be biproper or regular at infinity if the Taylor expansions above are in terms of $1/\la$ instead of the factor $(\la -\la_0).$
\end{definition}

\begin{definition} Let $R(\la)$ be an arbitrary  $m\times n$ rational matrix of normal rank $r$. Then its {\em local Smith-McMillan form} at a point $\lambda_0\in \mathbb{C}$ is the diagonal matrix obtained under rational left and right transformations  $M_{\ell}(\lambda)$ and $M_r(\lambda)$, that are {\em regular} at $\lambda_0$:
	\begin{equation} \label{snf} 
		M_{\ell}(\lambda)R(\lambda)M_r(\lambda) = \left[\begin{array}{cc}
		\diag{(\la-\la_{0})^{d_{1}} ,\ldots, (\la-\la_{0})^{d_{r}}} & 0 \\
		0& 0_{(m-r)\times (n-r)}
		\end{array}\right],
	\end{equation}
	where $d_1\le d_2 \le \ldots \le d_r$. If $\lambda_0=\infty$, the basic factor $(\lambda-\lambda_0)$ is replaced by 
	$\frac{1}{\lambda}$ and the transformation matrices are then biproper. The latter can be viewed as a change of variable $\mu=\frac{1}{\lambda}$  which transform $\lambda_0=\infty$ to $\mu_0=0$.
\end{definition}

\begin{remark}\label{def_polardeg}
	The normal rank of a rational matrix is the size of its largest nonidentically zero minor. The indices  $d_i$ are unique and are called the
	{\em structural indices} of $R(\lambda)$ at $\lambda_0$. In particular, the strictly positive indices correspond to a zero at $\la_0,$ and the strictly negative indices correspond to a pole at $\la_0$. The {\em zero degree} is defined as the sum of all structural indices of all zeros (infinity included), and the {\em polar degree} is the sum of all structural indices (in absolute value) of all poles (infinity included).
\end{remark}

	\begin{example}{Example}
		Let us consider the $2\times 2$ rational matrix
		\begin{equation} \label{Ex1}
			R(\la) = \left[\begin{array}{cc} e_5(\lambda) \;&\; 0 \\ c/\la \;&\; e_1(\lambda) \end{array}\right] 
		\end{equation}
		where $e_5(\lambda)$ is a monic polynomial of degree 5 and $e_1(\lambda)$ is a monic polynomial of degree 1, with $e_5(0)\neq 0$ and $e_1(0)\neq 0$. If $c\neq 0$, the only poles are 0 and infinity, and the corresponding local Smith-McMillan forms for these two points are 
		$$ \lambda_0=0 : \diag{\lambda^{-1},\lambda^1}, \quad \lambda_0=\infty \; (\mu_0=0) : \diag{\mu^{-5},\mu^{-1}},
		$$
		indicating that $\lambda_0=0$ is a zero as well as a pole. The other finite zeros are the six finite roots of $e_5(\lambda)$ and $e_1(\lambda)$.
		The polar degree and the zero degree for this example are thus both equal to 7. When $c=0$, the pole and zero at $\lambda=0$ disappear and the matrix is polynomial instead of rational. The polar and zero degree are then both equal to 6. 
	\end{example}

The above definitions of pole and zero structure of a rational matrix $R(\la)$ are those that are commonly used in linear systems theory (see \cite{Ros70}) and are due to McMillan. They describe the spectral properties of a rational matrix. But when applying them to matrix pencils $S(\la)$ we may wonder if they coincide with definitions of eigenvalues and generalized eigenvalues and their multiplicities, i.e. the Kronecker structure of $S(\la)$ (see \cite{Gan59}). 

\begin{definition} The Kronecker canonical form of an arbitrary $m\times n$ pencil $\lambda B-A$ of normal rank $r$ is a block diagonal form obtained via invertible transformations $S$ and $T$~:
		$$ S(\lambda B-A)T = \diag{L^T_{\eta_1}(\la), \ldots, L^T_{\eta_{m-r}}(\la), \lambda I_{r_f} - A_J, \lambda N - I_{r_\infty}, L_{\epsilon_1}(\lambda), \ldots, L_{\epsilon_{n-r}}(\la) }$$
		where $A_J$ is in Jordan form, $N$ is nilpotent and in Jordan form, and 
		$$L_k(\la):=\left[\begin{array}{cccc} \la & 1 \\  & \ddots & \ddots \\  & & \la & 1 \end{array}\right]  $$ is a $k\times(k+1)$ singular pencil. 
		The finite eigenvalues of $(\la B-A)$ are the $r_f$ eigenvalues of $A_J$ and its $r_\infty$ infinite eigenvalues are the generalized eigenvalues of 
		$\lambda N-I_{r_\infty}$.
	\end{definition}

For this comparison, we only need to look at zeros, since a pencil has only one pole (namely, infinity) and its multiplicity is the rank of the 
coefficient of $\la$. In other words, its polar structure is trivial. But what about the correspondence of the {\em zero structure} of $S(\la)$ (in the McMillan sense) and the eigenvalue structure of $S(\la)$ (in the sense of Kronecker)? It turns out that for finite eigenvalues of $S(\la)$ there is a complete isomorphism with the zero structure of $S(\la)$: every Jordan block of size $k$ at an eigenvalue $\lambda_0$ in the Kronecker canonical form of $S(\la)$ corresponds to an elementary divisor $(\la-\la_0)^k$ in the Smith-McMillan form of $S(\la)$. But for $\la=\infty$, there is a difference. It is well known (see \cite{VVK79}) that a Kronecker block of size $k$ at $\la=\infty$ corresponds to an elementary divisor $(\frac{1}{\la})^{(k-1)}$ in the Smith-McMillan form. For the point at infinity there is thus a shift of 1 in the structural indices. For this reason we want to make a clear distinction between both index sets. Whenever we talk about {\em zeros}, we refer to the McMillan structure, and whenever we talk about {\em eigenvalues}, we refer to the Kronecker structure.

\medskip

It is well known that every rational vector subspace $\mathcal{V}$,
i.e., every subspace $\mathcal{V} \subseteq \mathbb{C}(\la)^n$ over the field $\mathbb{C}(\la)$,
has bases consisting entirely of polynomial vectors. Among them some are minimal in the following sense introduced by Forney \cite{For75}: a {\em minimal basis} of $\mathcal{V}$ is a basis of $\mathcal{V}$ consisting of polynomial vectors whose sum of degrees is minimal among all bases of $\mathcal{V}$  consisting of polynomial vectors. The fundamental property \cite{For75,Kai80} of such bases is that the ordered list of degrees of the polynomial vectors in any minimal basis of $\mathcal{V}$ is always the same. Therefore, these degrees are an intrinsic property of the subspace $\mathcal{V}$ and are called the {\em minimal indices} of $\mathcal{V}$.
This leads to the definition of the minimal bases and indices of a rational matrix. An $m\times n$ rational matrix $R(\lambda)$ of normal rank $r$ smaller than $m$ and/or $n$ has non-trivial left and/or right rational null-spaces, respectively, over the field $\mathbb{C}(\la)$:
\begin{eqnarray*}
	{\cal N}_{\ell}(R)\!\! &:=& \!\left\{y(\la)^T \in\mathbb{C}(\la)^{1 \times m}
	\,:\,y(\la)^T R(\la)\equiv0^T\right\},\\
	{\cal N}_r(R)\!\! &:=& \!\left\{x(\la)\in\mathbb{C}(\la)^{n \times 1}
	\,:\, R(\la)x(\la)\equiv0\right\}.
\end{eqnarray*}
Rational matrices with non-trivial left and/or right null-spaces are said to be {\em singular}. If the rational subspace ${\cal N}_{\ell}(R)$ is non-trivial, it has minimal bases and minimal indices, which are called the {\em left minimal bases and indices} of $R(\la)$. Analogously, the {\em right minimal bases and indices} of $R(\la)$ are those of ${\cal N}_{r}(R)$, whenever this subspace is non-trivial. Notice that an $m\times n$ rational matrix of normal rank
$r$ has  $m-r$ left minimal indices $\{ \eta_1, \ldots, \eta_{m-r}\}$, and $n-r$ right minimal indices $\{ \epsilon_1, \ldots, \epsilon_{n-r}\}$.

The {\em McMillan degree} $\delta(R)$ of a rational matrix $R(\la)$ is the polar degree introduced in Remark \ref{def_polardeg}. 
The following degree sum theorem was proven in \cite{VVK79}, and relates the McMillan degree to the other structural elements
of $R(\la)$: to the the {\em zero degree} $\delta_z(R)$, to the {\em left nullspace degree} $\delta_\ell(R)$, that is the sum of all left minimal indices, and to the {\em right nullspace degree} $\delta_r(R)$, that is the sum of all right minimal indices.
\begin{theorem}\label{th_degsumtheorem}
	Let $R(\la)\in  \mathbb{C}(\la)^{m\times n}$. Then
	$$ \delta(R) := \delta_p(R)= \delta_z(R)+ \delta_\ell(R)+ \delta_r(R).$$
\end{theorem}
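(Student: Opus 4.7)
My strategy is to pull the identity back from $R(\la)$ to a strongly minimal linear system matrix $S(\la)$ realizing it---such a realization exists by \cite{Van79,VVK79,VanD83} (and by the reduction procedure developed later in this paper). Under strong minimality the finite and infinite McMillan zero structure of $R(\la)$, its left and right minimal indices, and its McMillan polar structure are all read off directly from $S(\la)$: the polar structure is encoded by $T(\la)$ alone, while the remaining structures are encoded by the pencil $S(\la)$ as a whole. Thus all four quantities in the claimed identity become visible on the Kronecker canonical form of a single matrix pencil.

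Next I would put $S(\la)$ into Kronecker canonical form. It decomposes into blocks of type $L_{\epsilon_i}$ (one per right minimal index $\epsilon_i$ of $R$), $L_{\eta_j}^T$ (one per left minimal index $\eta_j$), finite Jordan blocks (one per finite zero of $R$, with matching size), and infinite Weierstrass blocks (one of size $k$ per infinite McMillan zero of $R$ of order $k-1$, using the shift recalled just above the theorem). The analogous Weierstrass decomposition of $T(\la)$ encodes the finite and infinite polar structure of $R(\la)$, again with the same size/order shift of $1$ at infinity.

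The proof then reduces to dimension bookkeeping. Counting columns on each Kronecker decomposition, each $L_{\epsilon_i}$ contributes $\epsilon_i+1$, each $L_{\eta_j}^T$ contributes $\eta_j$, the finite-Jordan part contributes $\delta_{z,\mathrm{f}}(R)$, and the infinite-Weierstrass part contributes $\delta_{z,\infty}(R)+\nu_\infty$, where $\nu_\infty$ is the number of infinite Kronecker blocks of $S$. The analogous count on $T(\la)$ expresses its size as $\delta_{p,\mathrm{f}}(R)+\delta_{p,\infty}(R)+\mu_\infty$, with $\mu_\infty$ the number of infinite blocks of $T$. Subtracting the two counts and invoking the Schur-complement identity $\mathrm{nrank}(S)=\mathrm{nrank}(T)+\mathrm{nrank}(R)$, the offsets coming from $(m-r)$, $(p-r)$, $\nu_\infty$ and $\mu_\infty$ all cancel against one another, leaving exactly $\delta_p(R)=\delta_z(R)+\delta_\ell(R)+\delta_r(R)$.

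The main obstacle is precisely the infinite index shift highlighted in the paragraph preceding the theorem: a Kronecker block of size $k$ at infinity only contributes $k-1$ units to the McMillan infinite pole/zero degree, so the naive column counts carry surplus $\nu_\infty$ (resp.\ $\mu_\infty$). These surpluses must be counterbalanced exactly by the ``$+1$''s coming from the right-singular blocks of $S(\la)$ and by the corresponding term in $T(\la)$; the Schur-complement normal-rank identity is what forces this balance. Once that cancellation is verified, the identity is purely combinatorial, and all the real mathematical content has been absorbed into the Step~1 invariance properties of strongly minimal realizations.
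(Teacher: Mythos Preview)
The paper does not prove this theorem at all: it is quoted in the Background section as a known result, with the proof delegated to \cite{Van79} and \cite{VVK79}. So there is no in-paper argument to compare your proposal to.

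Assessing your proposal on its own merits, the overall architecture (realize $R$ by a pencil, then read everything off the Kronecker form and do an index count) is exactly the classical route, but your execution has a real gap. You assert that ``the analogous Weierstrass decomposition of $T(\la)$ encodes the finite \emph{and infinite} polar structure of $R(\la)$.'' For a general strongly minimal \emph{linear} system matrix this is false: Theorem~\ref{minMcM} item~4 says the infinite polar structure of $R$ is carried by the enlarged pencil \eqref{larger}, not by $T(\la)=\la A_1-A_0$ alone. Concretely, $A_1$ may be singular (even zero) while $B_1,C_1,D_1$ still contribute infinite poles to $R$; in that case your column count on $T$ yields a number unrelated to $\delta_{p,\infty}(R)$, and the promised cancellation of $\mu_\infty,\nu_\infty$ against the ``$+1$'' terms simply does not occur. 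The claim \emph{is} correct if you take a strongly irreducible \emph{generalized state-space} realization (constant $B,C,D$), because then the enlarged pencil \eqref{larger} reduces, up to a constant invertible block, to $\la E-A$ itself. So the fix is to commit to a GSS realization from \cite{VVK79} rather than a general linear system matrix.

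Two smaller points. First, once you are in the GSS setting, the bookkeeping is cleaner if you count $\mathrm{rank}\,L_1=\mathrm{rank}\,E$ in two ways: from the Kronecker form of $S(\la)$ you get $\mathrm{rank}\,L_1=\delta_z(R)+\delta_\ell(R)+\delta_r(R)$ (each $L_{\epsilon}$, $L_\eta^T$, finite Jordan block, and infinite block contributes $\epsilon,\eta,k,k-1$ respectively to the rank of the leading coefficient), while from the Weierstrass form of $\la E-A$ and the pole correspondence you get $\mathrm{rank}\,E=\delta_{p,\mathrm{f}}(R)+\delta_{p,\infty}(R)=\delta_p(R)$. This avoids the somewhat vague ``offsets all cancel'' step. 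Second, watch for circularity: you invoke the existence and structural correspondences of strongly irreducible realizations from \cite{Van79,VVK79}, which are the same references the paper cites for the degree sum theorem itself; you should check (it is indeed the case) that those correspondences are established there without using the identity you are trying to prove.
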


\section{Strong irreducibility and minimality} \label{Minimality}

In this section we recall the strong irreducibility conditions in \cite{Ver80} for polynomial system matrices, and we introduce the notion of strong minimality. Then, we study the relation between them for the case of linear system matrices.

\begin{definition}\label{irred}
	A polynomial system matrix $S(\la)$ as in \eqref{Ros} is said to be
	{\em strongly controllable} and {\em strongly observable}, respectively, if the polynomial matrices
	\begin{equation}  \label{condverghese} \left[\begin{array}{ccc} T(\la) \;&\; -U(\la) \;&\; 0 \\ V(\la) \;&\; W(\la) \;&\;-I  \end{array}\right], \quad \mathrm{and} \quad
		\left[\begin{array}{ccc} T(\la) \;&\; -U(\la) \\ V(\la) \;&\; W(\la) \\ 0 \;&\; I \end{array}\right],
	\end{equation}
	have no finite or infinite zeros. If both conditions are satisfied $S(\la)$ is said to be {\em strongly irreducible}.
\end{definition}

Let us now consider the transfer function matrix $R(\la)= W(\la) + V(\la)T(\la)^{-1}U(\la)$ of the polynomial system matrix in \eqref{Ros}. In such a case, we also say that the system quadruple $\{T(\la),U(\la),V(\la),W(\la)\}$  {\em realizes} $R(\la).$ Moreover, we say that the system quadruple is {\em strongly irreducible} if the polynomial system matrix is {\em strongly irreducible}. It was shown in \cite{Ver80} that the pole/zero and null space structure of $R(\la)$ can be retrieved from a strongly irreducible system quadruple $\{T(\la),U(\la),V(\la),W(\la)\}$ as follows.
\begin{theorem}
	If the polynomial system matrix $S(\la)$ in \eqref{Ros} is strongly irreducible, then 
	\begin{enumerate}
		\item the zero structure of $R(\la)$ at finite and infinite $\la$ is the same as the zero structure of $S(\la)$ at finite and infinite $\la$,
		\item the pole structure of $R(\la)$ at finite $\la$ is the same as the zero structure at $\la$ of $T(\la)$,
		\item the pole structure of $R(\la)$ at infinity is the same as the zero structure at infinity of
		$$  \left[\begin{array}{ccc} T(\la) \;&\; -U(\la) \;&\; 0 \\ V(\la) \;&\; W(\la) \;&\; -I \\ 0 \;&\; I \;&\; 0 \end{array}\right],
		$$
		\item the left and right minimal indices of $R(\la)$ and $S(\la)$ are the same.
	\end{enumerate}
\end{theorem}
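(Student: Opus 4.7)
The plan is to base everything on the rational Schur-complement identity
\[
S(\la) = \bmx I & 0 \\ V(\la)T(\la)^{-1} & I \emx \bmx T(\la) & 0 \\ 0 & R(\la) \emx \bmx I & -T(\la)^{-1}U(\la) \\ 0 & I \emx,
\]
which holds as an identity of rational matrices. Away from the finite zeros of $T$, the outer triangular factors are regular and invertible (locally unimodular), so the local Smith--McMillan form of $S$ at such a $\la_0$ is the direct sum of those of $T$ and $R$. This immediately yields parts 1 and 2 at every finite $\la_0$ that is not a zero of $T$.

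At a finite $\la_0$ that \emph{is} a zero of $T$, the outer factors develop poles and the decomposition must be analyzed locally. Here the strong controllability and strong observability of Definition \ref{irred} enter: they guarantee that the local Smith forms of $[\,T(\la),-U(\la)\,]$ and of $\bmx T(\la)\\V(\la)\emx$ introduce no extra finite zero beyond those already carried by $T$. Equivalently, near $\la_0$ one can find left and right multipliers regular at $\la_0$ that bring $S(\la)$ into a form equivalent to $T(\la)\oplus R(\la)$ up to unimodular factors; every zero elementary divisor of $T$ at $\la_0$ then cancels a pole of $R$ at $\la_0$ of the same order, and all remaining local indices live in $R$. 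This simultaneously establishes parts 1 and 2.

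For part 3, the same Schur-complement trick is applied to the $3\times 3$ augmented pencil displayed in the statement, but with $1/\la$ as the basic factor and with biproper transformations. The extra identity block is exactly the device that compensates for the well-known unit shift between Kronecker indices at infinity and Smith--McMillan indices at infinity of a pencil. Strong irreducibility at infinity (no infinite zeros of the matrices in \eqref{condverghese}) then guarantees, as in the finite case, that the infinite pole structure of $R$ can be read off from this augmented pencil and that the infinite zero structure of $R$ matches that of $S$ itself.

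For part 4, I would construct a bijection between minimal polynomial bases of the right null space of $R$ and those of the right null space of $S$: a polynomial $x(\la)$ with $R(\la)x(\la)=0$ lifts to the $S$-null vector $\bmx -T(\la)^{-1}U(\la)x(\la) \\ x(\la) \emx$, made polynomial by clearing denominators, and strong minimality forces this lift to preserve degrees and send minimal bases to minimal bases. Equality of the corresponding indices can then be cross-checked with the degree-sum theorem recalled above: parts 1--3 already fix the pole, zero, and total McMillan degree contributions, leaving no room for the minimal indices of $R$ to differ from those of $S$; the left null space is handled dually. The main obstacle is the careful bookkeeping at $\la=\infty$ in part 3, where biproper transformations must be verified to factor the augmented pencil as a clean direct sum, and the augmentation is precisely what makes this work uniformly with the finite case.
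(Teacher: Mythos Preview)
First, note that the paper does not actually prove this theorem: it is quoted from Verghese \cite{Ver80} as background, so there is no in-paper proof to compare against. Your Schur-complement identity is the standard starting point and your argument is fine at every finite $\la_0$ where $T(\la_0)$ is invertible.

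The genuine gap is at a finite zero $\la_0$ of $T$. You assert that ``near $\la_0$ one can find left and right multipliers regular at $\la_0$ that bring $S(\la)$ into a form equivalent to $T(\la)\oplus R(\la)$.'' This cannot hold: $S(\la)$ is polynomial and so has no poles at $\la_0$, whereas $T(\la)\oplus R(\la)$ inherits the poles of $R$ there; two rational matrices related by transformations regular at $\la_0$ share the same local Smith--McMillan form, so $S$ and $T\oplus R$ are \emph{not} so related. Your follow-up sentence about zeros of $T$ ``cancelling'' poles of $R$ is a restatement of the conclusion, not a mechanism. The proofs in the literature use the full-rank hypotheses more directly: because $[T(\la_0)\;\; -U(\la_0)]$ has full row rank and $\left[\begin{smallmatrix}T(\la_0)\\V(\la_0)\end{smallmatrix}\right]$ has full column rank, one can act on $S$ by \emph{constant} invertible matrices to expose a square block that is invertible at $\la_0$ in the $(1,1)$ position, and Schur-complement with respect to \emph{that} block (not $T$); the resulting transformations are then genuinely regular at $\la_0$.

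A smaller issue in part~4: you propose to make the lift $\left[\begin{smallmatrix}T(\la)^{-1}U(\la)x(\la)\\x(\la)\end{smallmatrix}\right]$ polynomial ``by clearing denominators,'' which would wreck the degree count. In fact no clearing is needed. If $x$ is polynomial with $R(\la)x(\la)=0$ and $y:=T^{-1}Ux$ had a pole at some $\la_0$ with leading Laurent coefficient $c\neq 0$, then from $Ty=Ux$ and $Vy=-Wx$ (both polynomial) one reads off $\left[\begin{smallmatrix}T(\la_0)\\V(\la_0)\end{smallmatrix}\right]c=0$, contradicting observability. So the lift is already polynomial, and \emph{that} is what makes the degree bookkeeping work; you still owe the check that a minimal basis maps to a minimal basis, which the degree-sum theorem alone does not give.
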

If one specializes this to the generalized state space model \eqref{gss} one retrieves the results of \cite{VVK79}, which are simpler and only involve 
the pencils $(\la E-A)$, \eqref{gss} and \eqref{gssminimal}. We now show that the above conditions can be simplified when the system matrices
are linear as in \eqref{pencil}. First, we present the definition of strongly minimal polynomial system matrix.

\begin{definition}\label{min}
	Let $d$ be the degree of the polynomial system matrix $S(\la)$ in \eqref{Ros}. $S(\la)$ is said to be {\em strongly E-controllable} and {\em strongly E-observable}, respectively, if the polynomial matrices
	\begin{equation}  \label{condlocal} \left[\begin{array}{cc} T(\la) \;&\; -U(\la)  \end{array}\right], \quad \mathrm{and} \quad
		\left[\begin{array}{c} T(\la)  \\ V(\la)  \end{array}\right],
	\end{equation}
	have no finite or infinite\footnote{The eigenvalues at infinity of a polynomial matrix $P(\la)$ considered as a polynomial matrix of grade $g$, with $g\geq \text{degree}\,P(\la)$, are the eigenvalues at zero of $\text{rev}_g P(\la):=\la^{g}P(1/\la)$ (see \cite{GohbergLancasterRodman09}). } eigenvalues, considered as polynomial matrices of grade $d$. If both conditions are satisfied $S(\la)$ is said to be {\em strongly minimal}.
\end{definition}

The letter E in the definition of strong E-controllability and E-observability refers to the condition of the matrices in \eqref{condlocal} not having eigenvalues, finite or infinite. We prove in Proposition \ref{prop.irredandmin} that the strong irreducibility conditions hold if the strong minimality conditions are satisfied. For this, we need  to recall Lemma 1 of \cite{VVK79}, which we give here in its transposed form. Then, we prove Theorems \ref{infeig} and \ref{infeig2}, and Proposition \ref{prop.irredandmin} as a corollary of them.
\begin{lemma} \label{vvklemma}
	The zero structure at infinity of the pencil $\left[ \begin{array}{c|c}\la K_1-K_0 \;&\; -L_0 \end{array}\right]$ where $K_1$ has full column rank, is isomorphic to the zero structure  at zero of the pencil $\left[ \begin{array}{c|c}K_1- \mu K_0 \;&\; -L_0 \end{array}\right]$. Moreover, if the pencil has full row normal rank, then it has no zeros at infinity, provided  the constant matrix  $\left[ \begin{array}{c|c}K_1 \;&\; -L_0 \end{array}\right]$ has full row rank. 
\end{lemma}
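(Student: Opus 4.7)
The plan is to exploit the full column rank of $K_1$ to bring both pencils simultaneously into a ``state-space'' form, then perform a local Schur-complement reduction around $\mu=0$, and finally compare structural indices using the grade-$1$ conversion between the Smith indices of the reversal at $0$ and the McMillan structural indices at infinity.

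Concretely, I would choose an invertible (e.g., unitary) $U$ with $UK_1=\bmx \hat K_1\\ 0\emx$ and $\hat K_1$ square nonsingular, and right-multiply by $\mathrm{diag}(\hat K_1^{-1},I)$. These constant invertible transformations preserve every local Smith-McMillan invariant at finite points and at infinity, and they bring both pencils simultaneously to
\[
\bmx \la K_1-K_0 & -L_0\emx\ \sim\ \bmx \la I-A & -B\\ -C & -D\emx,\qquad
\bmx K_1-\mu K_0 & -L_0\emx\ \sim\ \bmx I-\mu A & -B\\ -\mu C & -D\emx,
\]
with constant matrices $A,B,C,D$. Since $I-\mu A$ is invertible in a neighborhood of $\mu=0$, a Schur-complement reduction using row/column operations analytic and invertible at $\mu=0$ then yields the local block-diagonal forms
\[
Q(\mu)\ \sim_{\mu=0}\ \mathrm{diag}(I,-H(\mu)),\qquad \rev P(\mu)\ \sim_{\mu=0}\ \mathrm{diag}(I,-\mu H(\mu)),
\]
where $H(\mu):=D+\mu C(I-\mu A)^{-1}B$ and $\rev P(\mu):=\mu P(1/\mu)=\bmx K_1-\mu K_0 & -\mu L_0\emx$.

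Now I would read off structural indices. If $H$ has Smith indices $e_1\le\cdots\le e_r$ at $\mu=0$ (necessarily $\ge 0$), then the Smith indices at $\mu=0$ of $Q$ are $\{0,\dots,0,e_1,\dots,e_r\}$, whereas those of $\rev P$ are $\{0,\dots,0,e_1+1,\dots,e_r+1\}$. Translating the latter into the McMillan structural indices of $P$ at infinity requires subtracting the grade $d=1$, giving $\{-1,\dots,-1,e_1,\dots,e_r\}$. The \emph{positive} parts of both index lists coincide with $\{e_i:e_i>0\}$, which is precisely the zero structure (at $\mu=0$ for $Q$, at $\la=\infty$ for $P$), establishing the claimed isomorphism.

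For the second assertion, the reductions above preserve the normal rank, so $Q$ has full row normal rank $m$ whenever $P$ does; if furthermore the constant matrix $Q(0)=\bmx K_1 & -L_0\emx$ has full row rank $m$, then $Q(\mu)$ has constant rank $m$ near $\mu=0$, all its structural indices at $0$ vanish, and by the isomorphism $P$ has no zeros at infinity. The main obstacle is the bookkeeping of the grade-$1$ shift: one has to check that the $+1$ contributed by the extra $\mu$ factor in $\rev P$ cancels exactly with the $-1$ from the grade conversion on the $H$-block, while the trivial $I$-block becomes poles at infinity of $P$ (not zeros) and is thus excluded from the zero structure.
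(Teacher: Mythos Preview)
Your argument is correct. The reduction to the form $\bmx \la I-A & -B\\ -C & -D\emx$ via the full column rank of $K_1$, the Schur-complement localization around $\mu=0$, and the grade-$1$ bookkeeping relating the Smith indices of $\rev P$ at $0$ to the McMillan indices of $P$ at $\infty$ all go through exactly as you describe; in particular your observation that the $I$-block contributes only indices $-1$ (poles, not zeros) at infinity is the key step that makes the zero structures match on the nose.

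The paper, however, does not prove the first part at all: it simply invokes \cite{VVK79} and then notes that the second assertion follows by evaluating at $\mu=0$. So your route is not so much \emph{different} as it is an actual self-contained proof where the paper gives none. What you gain is independence from the cited reference and an explicit mechanism (the matrix $H(\mu)=D+\mu C(I-\mu A)^{-1}B$) that makes the isomorphism concrete; what the paper gains is brevity. Your derivation of the second assertion is essentially the same as the paper's one-line argument, since ``$Q(0)$ has full row rank $\Rightarrow$ all structural indices of $Q$ at $0$ vanish'' is precisely ``fill in $\mu=0$''.
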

\begin{proof} The first part is proven in \cite{VVK79}. The second part is a direct consequence of the first part, when filling in $\mu =0$.
\end{proof}
\begin{theorem} \label{infeig}
	The pencil 
	\begin{equation} \label{cond1}
		\left[\begin{array}{ccc} \la A_1 -A_0 \;&\; B_0- \la B_1  \;&\; 0 \\ \la C_1-C_0 \;&\; \la D_1-D_0 \;&\; -I \end{array}\right],
	\end{equation}
	where $\la A_1 -A_0$ is regular, has no zeros at infinity if the pencil 
	\begin{equation} \label{cond2} 
		\left[\begin{array}{ccc} \la A_1 -A_0 \;&\; B_0- \la B_1  \end{array}\right]
	\end{equation}
	has no eigenvalues at infinity.
\end{theorem}
\begin{proof} 
	Clearly the pencils in \eqref{cond1} and \eqref{cond2} have full row normal rank since  $\la A_1 -A_0$ is regular. We can thus apply the result of Lemma \ref{vvklemma} as follows.
	If we use an invertible matrix $V$ to ``compress'' the columns of the coefficient of $\la$ in the following pencil
	$$ \left[\begin{array}{cc|c} \la A_1 -A_0 \;&\; B_0- \la B_1  \;&\; 0 \\ \la C_1-C_0 \;&\; \la D_1-D_0 \;&\; -I \end{array}\right]
	\left[\begin{array}{c|c} V  \;&\; 0 \\ \hline 0 \;&\; I  \end{array}\right]= 
	\left[\begin{array}{cc|c} \la K_1 -K_0 \;&\; -L_0  \;&\; 0 \\ \la \widehat K_1-\widehat K_0 \;&\; -\widehat L_0 \;&\; -I \end{array}\right],
	$$
	such that the matrix $\left[\begin{array}{c} K_1\\ \widehat K_1 \end{array}\right]$ has full column rank, 
	then this pencil has no zeros at infinity provided the constant matrix 
	$  \left[\begin{array}{cc|c} K_1 \;&\; -L_0  \;&\; 0 \\ \widehat K_1 \;&\; -\widehat L_0 \;&\; -I \end{array}\right]
	$
	has full row rank. But if $\left[\begin{array}{ccc} \la A_1 -A_0 \;&\; B_0- \la B_1  \end{array}\right]$ has no infinite eigenvalues, it follows that 
	$ \left[\begin{array}{cc} A_1 \;&\; - B_1 \end{array}\right]$ has full row rank. And since $ \left[\begin{array}{cc} A_1 \;&\; - B_1 \end{array}\right]V= \left[\begin{array}{cc} K_1 \;&\; 0 \end{array}\right]$, $K_1$ must have full row rank as well (in fact, it is invertible). It then follows from Lemma \ref{vvklemma} that the pencil in \eqref{cond1} has no zeros at infinity. 
\end{proof}

In the next theorem, we state without proof the transposed version of Theorem \ref{infeig}.

\begin{theorem} \label{infeig2}
	The pencil
	\begin{equation*} \label{cond12}
		\left[\begin{array}{ccc} \la A_1 -A_0 \;&\; B_0- \la B_1 \\ \la C_1-C_0 \;&\; \la D_1-D_0 \\ 0 \;&\; I  \end{array}\right],
	\end{equation*}
	where $\la A_1 -A_0$ is regular, has no zeros at infinity if the pencil
	\begin{equation} \label{cond22} 
		\left[\begin{array}{ccc} \la A_1 -A_0 \\ \la C_1-C_0  \end{array}\right]
	\end{equation}
	has no eigenvalues at infinity.
\end{theorem}

Let us now consider a linear system matrix  \begin{equation}\label{linearpencil}
	L(\la) := \la L_1-L_0 := \left[\begin{array}{ccc} \la A_1 -A_0 \;&\; B_0- \la B_1  \\ \la C_1-C_0 \;&\; \la D_1-D_0 \end{array}\right],
\end{equation} 
with $\la A_1 -A_0$ regular. Notice that if $L(\la)$ is minimal (i.e., satisfies \eqref{minimal}) and, in addition, satisfies the conditions in \eqref{cond2} and \eqref{cond22}, then it is strongly minimal.  By Theorems \ref{infeig} and \ref{infeig2}, we have that these conditions imply strong irreducibility on linear system matrices. We state such result in Proposition \ref{prop.irredandmin}.

\begin{proposition} \label{prop.irredandmin} A linear system matrix as in \eqref{linearpencil} is strongly irreducible if it is strongly minimal.
\end{proposition}

\begin{remark} Notice that conditions \eqref{cond2} and \eqref{cond22} are only sufficient, not necessary. But they are easy to test, 
	and also to obtain after a reduction procedure, as we show in Section \ref{Reducing}.
\end{remark}

Theorems \ref{infeig} and \ref{infeig2} and Proposition \ref{prop.irredandmin} can be extended to polynomial system matrices. However, we do not state these results here since, in this paper, we are focusing on linear system matrices. If we recapitulate the results of this section, we obtain the following theorem.
\begin{theorem} \label{minMcM}
	A linear system pencil $L(\la)$ as in \eqref{linearpencil}, realizing the transfer function $R(\la):=(\la D_1-D_0)+(\la C_1-C_0)( \la A_1-A_0)^{-1}( \la B_1-B_0)$, is strongly irreducible if it is strongly minimal. Moreover, if $L(\la)$ is strongly irreducible then 
	\begin{enumerate}
		\item the zero structure of $R(\la)$ at finite and infinite $\la$ is the same as the zero structure of $L(\la)$ at finite and infinite $\la$,
		\item the left and right minimal indices of $R(\la)$ and $L(\la)$ are the same,
		\item the finite polar structure of $R(\la)$ is the same as the finite zero structure of $\la A_1-A_0$, and
		\item the infinite polar structure of $R(\la)$ is the same as the infinite zero structure of the pencil
		\begin{equation}\label{larger}
			\left[\begin{array}{ccc} \la A_1 -A_0 \;&\; - \la B_1 \;&\; 0  \\ \la C_1  \;&\;  \la D_1  \;&\; -I \\
				0 \;&\; I \;&\; 0 \end{array}\right].
		\end{equation}
	\end{enumerate}
\end{theorem}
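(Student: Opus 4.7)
The first claim---that strong minimality implies strong irreducibility---is immediate from Proposition~\ref{prop.irredandmin}, so I would state it at the outset and then assume for the remainder that $L(\la)$ is strongly irreducible. My plan for the four enumerated items is to specialise the general theorem recalled earlier in this section (valid for any strongly irreducible polynomial system matrix) to the linear case, via the identifications $T(\la)=\la A_1-A_0$, $U(\la)=\la B_1-B_0$, $V(\la)=\la C_1-C_0$ and $W(\la)=\la D_1-D_0$. Under these substitutions, items 1, 2 and 3 of the present theorem are literally items 1, 4 and 2 of the general theorem, and require no further argument.

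The only step that requires real work is item 4. The general theorem identifies the infinite pole structure of $R(\la)$ with the infinite zero structure of
$$
M(\la) := \left[\begin{array}{ccc} \la A_1-A_0 & B_0-\la B_1 & 0 \\ \la C_1-C_0 & \la D_1-D_0 & -I \\ 0 & I & 0 \end{array}\right],
$$
whereas our theorem demands the infinite zero structure of the pencil $N(\la)$ appearing in~\eqref{larger}. My plan is to exhibit constant invertible matrices $P$ and $Q$ satisfying $P\,N(\la)\,Q = M(\la)$; since strict equivalence by constant matrices preserves the local Smith--McMillan form at every point, including $\la=\infty$, this instantly identifies the two infinite zero structures.

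Concretely, $M$ and $N$ agree on the $(1,1)$ block and in the third block row and column, and differ only by the constants $B_0$, $-C_0$ and $-D_0$ in the $(1,2)$, $(2,1)$ and $(2,2)$ blocks respectively. These discrepancies can be absorbed using the identity blocks in positions $(3,2)$ and $(2,3)$: adding $B_0$ times the third block row to the first, $-D_0$ times the third block row to the second, and $C_0$ times the third block column to the first. The main conceptual point is precisely that these operations can be performed with \emph{constant} (rather than merely unimodular) transformations, which is essential because only the former is guaranteed to preserve the Smith--McMillan structure at infinity; once this is observed, verifying $P\,N(\la)\,Q = M(\la)$ is a routine block computation.
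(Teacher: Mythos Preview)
Your proposal is correct and matches the paper's approach. The paper does not give a formal proof of this theorem---it is presented as a recapitulation of the section---but the sentence immediately following it (``Notice that in \eqref{larger} we have eliminated the matrices $B_0,C_0$ and $D_0$ with equivalence transformations using the identity matrices as pivots'') is precisely your constant-equivalence argument for item~4, and items~1--3 are indeed direct specialisations of the general Verghese theorem and Proposition~\ref{prop.irredandmin}.
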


\begin{remark}\rm It follows from this theorem and the degree sum theorem in Theorem \ref{th_degsumtheorem} that the rank of $L_1$ equals the 
	McMillan degree of $R(\la)$, and that there can be no linear system matrix for $R(\la)$ with a smaller rank of $L_1$ that satisfies Theorem \ref{minMcM}. 
\end{remark}

It may look strange that there is such a difference in the treatment of finite and infinite poles of $R(\la)$ in Theorem \ref{minMcM}, but it should be pointed out that the matrices
$(B_1, C_1, D_1)$ contribute to the infinite polar structure of $R(\la)$, and not to the finite polar structure. Notice that in \eqref{larger} we have eliminated the 
matrices $B_0,C_0$ and $D_0$ with strict equivalence transformations using the identity matrices as pivots.

\section{Reducing to a strongly minimal linear system matrix} \label{Reducing}

In this section we give an algorithm to reduce an arbitrary linear system matrix to a strongly minimal one. Given a linear system quadruple $\{A(\la),B(\la),C(\la),D(\la)\},$ where $A(\la)\in\mathbb{C}(\la)^{d\times d}$, $B(\la)\in\mathbb{C}(\la)^{d\times n}$, $C(\la)\in\mathbb{C}(\la)^{m\times d}$, $D(\la)\in\mathbb{C}(\la)^{m\times n}$ and $A(\la)$ is assumed to be regular, we describe first how to obtain a strongly E-controllable quadruple  $\allowbreak\{A_c(\la),B_c(\la), C_c(\la),D_c(\la)\}$ of smaller state dimension $(d-r)$. For that, our reduction procedure deflates finite and infinite ``uncontrollable eigenvalues'' by proceeding in three different steps. Then the reduction to a strongly E-observable one is dual and can be obtained by mere transposition of the system matrix and application of the first method for obtaining a strongly E-controllable system.

\textbf{Step 1:} We first show that there exist unitary transformations $U$ and $V$ that yield a decomposition of the type
\begin{equation} \label{step1} \left[\begin{array}{cc} U  \;&\; 0 \\  0 \;&\; I_m  \end{array}\right]
	\left[\begin{array}{cc} A(\la) \;&\; - B(\la) \\  C(\la) \;&\; D(\la)  \end{array}\right]
	\left[\begin{array}{cc} V  \;&\; 0 \\  0 \;&\; I _n \end{array}\right]=
	\left[\begin{array}{ccc}  X(\la) \widehat W_{11} \;&\;0 \;&\; X(\la) W_{13} \\  \widetilde Y(\la) \;&\; \widetilde A(\la) \;&\; - \widetilde B(\la) \\ \widetilde Z(\la) \;&\; \widetilde C(\la) \;&\;  D(\la)  \end{array}\right],
\end{equation}
where $\widehat W_{11}\in\mathbb{C}^{r\times r}$ and $W_{13}\in\mathbb{C}^{r\times n}$ are constant, and $\widehat W_{11}$ is invertible. This will allow us in step 2 to deflate the block $X(\la)$ and construct a lower order model that is strongly E-controllable. In order to prove this, we start from the generalized Schur decomposition for singular pencils (see \cite{Van79b})
\begin{equation}\label{schurdec}
	U 
	\left[\begin{array}{cc} A(\la) \;&\; - B(\la)  \end{array}\right]W^\ast=
	\left[\begin{array}{ccc}  X(\la)\;&\; 0 \;&\; 0 \\  Y(\la) \;&\; \widehat A(\la) \;&\;  -\widehat B(\la)  \end{array}\right],
\end{equation}
where $X(\la) \in \mathbb{C}[\la]^{r\times r}$ is the regular part of $\begin{bmatrix}
 A(\la) \;&\; -B(\la)\end{bmatrix},$ $\widehat A(\la) \in \mathbb{C}[\la]^{(d-r)\times(d-r)}$, and $\begin{bmatrix} \widehat A(\la) \;&\; -\widehat B(\la)\end{bmatrix}$ has no finite or infinite eigenvalues anymore. The decomposition in \eqref{schurdec} can be obtained by using unitary transformations $U$ and $W.$ If we partition $U$ as $\left[\begin{array}{c}U_1 \\U_2 \end{array}\right],$ with $U_1\in \mathbb{C}^{r\times d}$, then 
$$ U_1\left[\begin{array}{c|c} A(\la) \;&\; - B(\la)  \end{array}\right]=
\left[\begin{array}{cc|c}  X(\la)W_{11} \;&\; X(\la)W_{12} \;&\; X(\la)W_{13} \end{array}\right],
$$
where $W_{11}\in\mathbb{C}^{r\times r}$, $W_{12}\in\mathbb{C}^{r\times (d-r)}$ and $W_{13}\in\mathbb{C}^{r\times n}$ are the corresponding submatrices of $W$. Since $A(\la)$ is regular, $X(\la)\begin{bmatrix}W_{11} \;&\; W_{12} \end{bmatrix}$ must be full normal rank, and hence $\begin{bmatrix}W_{11} \;&\; W_{12}  \end{bmatrix}$ must be full row rank as well. 
Therefore, there must exist a unitary matrix $V$ such that $\begin{bmatrix}W_{11} \;&\;  W_{12}\end{bmatrix}V= \begin{bmatrix}\widehat W_{11}  \;&\; 0 \end{bmatrix}$, where $\widehat W_{11}$ is invertible. Hence, we have 
\begin{equation*} \label{intermediate} \left[\begin{array}{cc} U \;&\; 0 \\  0 \;&\;  I_{m} \end{array}\right] \left[\begin{array}{cc}  A(\la) \;&\; -B(\la) \\  C(\la) \;&\; D(\la) \end{array}\right] 
	\left[\begin{array}{cc} V \;&\; 0 \\  0 \;&\; I_{n} \end{array}\right]= 
	\left[\begin{array}{ccc} X(\la) \widehat W_{11} \;&\; 0 \;&\; X(\la) W_{13} \\  \widetilde Y(\la) \;&\; \widetilde A(\la) \;&\; - \widetilde B(\la) \\ \widetilde Z(\la) \;&\; \widetilde C(\la) \;&\;  D(\la)  \end{array}\right],
\end{equation*}
where 
$$ W \left[\begin{array}{cc} V \;&\; 0 \\  0 \;&\; I_{n} \end{array}\right]= \left[\begin{array}{ccc} \widehat W_{11} \;&\; 0 \;&\; W_{13}\\
\widehat W_{21} \;&\; \widehat W_{22} \;&\; W_{23} \\ \widehat W_{31} \;&\; \widehat W_{32} \;&\; W_{33}\end{array}\right].
$$

\textbf{Step 2:} We now define $E := -\widehat W_{11}^{-1}W_{13}$ and perform the following non-unitary transformation on the pencil:
$$ \left[\begin{array}{ccc} X(\la)\widehat W_{11} \;&\; 0 \;&\; X(\la) W_{13} \\ \widetilde Y(\la) \;&\; \widetilde A(\la) \;&\;  - \widetilde B(\la) \\  \widetilde Z(\la) \;&\; \widetilde C(\la) \;&\; D(\la) \end{array}\right] \! \left[\begin{array}{ccc} I_{r} \;&\; 0 \;&\; E \\  0 \;&\; I_{d-r} \;&\;  0 \\  0  \;&\; 0 \;&\; I_{n} \end{array}\right]
\! = \! \left[\begin{array}{ccc} X(\la)\widehat W_{11} \;&\; 0 \;&\; 0 \\ \widetilde Y(\la) \;&\;  \widetilde A(\la) &  \widetilde Y(\la)E- \widetilde B(\la) \\  \widetilde Z(\la) \;&\;  \widetilde C(\la) \;&\; \widetilde Z(\la)E  +D(\la) \end{array}\right].
$$
We have obtained an equivalent system representation in which the $(1,1)$-block, $X(\la)\widehat W_{11},$ can be deflated
since it does not contribute to the transfer function. We then obtain
a smaller linear system pencil:
$$ \left[\begin{array}{cc} \widetilde A(\la) \;&\;  \widetilde Y(\la)E- \widetilde B(\la)  \\ \widetilde C(\la) \;&\; \widetilde Z(\la)E+D(\la)   \end{array}\right],
$$ 
that has the same transfer function. One can also perform this elimination by another unitary transformation $\widetilde W$ constructed to eliminate $W_{13}$:
\begin{equation}\label{orthprod}  \left[\begin{array}{ccc} \widehat W_{11} \;&\; 0 \;&\; W_{13} \end{array}\right] \left[\begin{array}{ccc} \widetilde W_{11} & 0 & \widetilde W_{13} \\
		0 \;&\; I_{d-r} \;&\; 0 \\  \widetilde W_{31} \;&\; 0 \;&\; \widetilde W_{33} \end{array}\right] =\left[\begin{array}{ccc} I_{r} \;&\; 0 \;&\; 0 \end{array}\right],
\end{equation}
implying $\widetilde W_{11}=\widehat W_{11}^*$ , $\widetilde W_{31}=W_{13}^*$, and  $\widetilde W_{13}=-\widehat W_{11}^{-1}W_{13}\widetilde W_{33}$.
	This then yields
$$ \left[\begin{array}{ccc} X(\la)\widehat W_{11} \;&\; 0 \;&\;  X(\la)W_{13} \\ \widetilde Y(\la) \;&\;  \widetilde A(\la) \;&\;  - \widetilde B(\la) \\ \widetilde Z(\la) \;&\; \widetilde C(\la) \;&\; D(\la) \end{array}\right] \left[\begin{array}{ccc}  \widetilde W_{11} \;&\; 0 \;&\; \widetilde W_{13} \\
0 \;&\; I_{d-r} \;&\; 0 \\  \widetilde W_{31} \;&\; 0 \;&\; \widetilde W_{33} \end{array}\right]$$
$$
= \left[\begin{array}{ccc} X(\la) \;&\; 0 \;&\; 0 \\ \widetilde Y(\la)\widetilde W_{11} -\widetilde B(\la)\widetilde W_{31} \;&\; \widetilde A(\la) \;&\;  \widetilde Y(\la)\widetilde W_{13} - \widetilde B(\la)\widetilde W_{33} \\ \widetilde Z(\la)\widetilde W_{11}+D(\la)\widetilde W_{31} \;&\; \widetilde C(\la) \;&\; \widetilde Z(\la)\widetilde W_{13}+D(\la)\widetilde W_{33}  \end{array}\right].
$$
Notice that the new transfer function has now changed, but only by postmultiplication by the constant matrix $\widetilde W_{33}$, which moreover is invertible. This follows from 
$$  \left[\begin{array}{c} E\\  I_{n} \end{array}\right] \widetilde W_{33}=  \left[\begin{array}{c} \widetilde W_{13}\\  \widetilde W_{33} \end{array}\right],
$$
expressing that both matrices span the null-space of the same matrix $\left[\begin{array}{cc}\widehat W_{11} \;&\; W_{13} \end{array}\right]$ and where the right hand side matrix has full rank since it has orthonormal columns. This also implies that 
$$ \left[\begin{array}{cc} \widetilde A(\la) \;&\;  \widetilde Y(\la)E- \widetilde B(\la)  \\  \widetilde C(\la) \;&\; \widetilde Z(\la) E+D(\la)   \end{array}\right]
\left[\begin{array}{cc} I_{d-r} \;&\; 0 \\ 0 \;&\;  \widetilde W_{33}\end{array}\right]=
\left[\begin{array}{cc} \widetilde A(\la) \;&\;   \widetilde Y(\la)\widetilde W_{13}- \widetilde B(\la)\widetilde W_{33}  \\  \widetilde C(\la) \;&\; \widetilde Z(\la) \widetilde W_{13}+D(\la)\widetilde W_{33}   \end{array}\right],$$
which shows that their Schur complements are related by the constant matrix $\widetilde W_{33}$. 
\\

\textbf{Step 3:} Finally, we show that the submatrix 
$$ \left[\begin{array}{cc} \widetilde A(\la) \;&\;   \widetilde Y(\la)E- \widetilde B(\la)  \end{array}\right]
\left[\begin{array}{cc} I_{d-r} \;&\; 0 \\ 0 \;&\;  \widetilde W_{33}\end{array}\right]=
\left[\begin{array}{cc} \widetilde A(\la) \;&\;   \widetilde Y(\la)\widetilde W_{13}- \widetilde B(\la)\widetilde W_{33} \end{array}\right],
$$
has no finite or infinite eigenvalues anymore. For this, we first point out that the following product of unitary matrices has the form given below
	$$  W  
	\left[\begin{array}{ccc}  V \;&\; 0 \\ 0 \;&\; I_{n} \end{array}\right]  \left[\begin{array}{ccc}  \widetilde W_{11} \;&\; 0 \;&\; \widetilde W_{13} \\
	0 \;&\; I_{d-r} \;&\; 0 \\  \widetilde W_{31} \;&\; 0 \;&\; \widetilde W_{33} \end{array}\right] =: 
	\left[\begin{array}{ccc}  I_{r} \;&\; 0 \;&\; 0 \\
	0 \;&\;  \widetilde V_{22} \;&\; \widetilde V_{23} \\  0 \;&\; \widetilde V_{32}  \;&\; \widetilde V_{33} \end{array}\right] =:  \left[\begin{array}{ccc}  I_{r} \;&\; 0  \\ 0 \;&\; \widetilde V \end{array}\right]  
	$$
	because the identity \eqref{orthprod} implies that the first block column equals $\begin{bmatrix}
 I_r \;&\;  0 \;&\;  0 	\end{bmatrix}$. This then implies the equality
$$  \left[\begin{array}{ccc} X(\la) \;&\; 0 \;&\; 0 \\ \widetilde Y(\la)\widetilde W_{11} -\widetilde B(\la)\widetilde W_{31} \;&\;  \widetilde A(\la) \;&\; \widetilde Y(\la)\widetilde W_{13} - \widetilde B(\la)\widetilde W_{33} \end{array}\right]  $$
$$
=\left[\begin{array}{ccc}  X(\la) \;&\; 0 \;&\; 0 \\  Y(\la) \;&\; \widehat A(\la) \;&\;  -\widehat B(\la)  \end{array}\right]
\left[\begin{array}{ccc}  I_{r} \;&\; 0  \\ 0 \;&\; \widetilde V \end{array}\right],
$$
which in turn implies that $\left[\begin{array}{cc} \widetilde A(\la) \;&\;   \widetilde Y(\la)\widetilde W_{13}- \widetilde B(\la)\widetilde W_{33} \end{array}\right]$
has no finite or infinite eigenvalues. We thus have shown that the system matrix
$$ S_c(\la) := \left[\begin{array}{cc} A_c(\la) \;&\; - B_c(\la)  \\  C_c(\la) \;&\; D_c(\la)  \end{array}\right]:=
\left[\begin{array}{cc} \widetilde A(\la) \;&\;  \widetilde Y(\la)\widetilde W_{13} - \widetilde B(\la)\widetilde W_{33} \\  \widetilde C(\la) \;&\; \widetilde Z(\la)\widetilde W_{13}+D(\la)\widetilde W_{33}  \end{array}\right]
$$
is now strongly E-controllable and that its transfer function $R_c(\la)$ equals $R(\la)\widetilde W_{33},$ where $R(\la)$ is the transfer function of the original quadruple and $\widetilde W_{33}$ is invertible. We summarize the result obtained by the three-step procedure above in Theorem \ref{reducec}, where we denote $d-r$ by $d_c$, to indicate that it is the size of $A_c(\la) $ in the new strongly E-controllable system, and $r$ is replaced by $d_{\overline c}$, so that $d=d_{\overline c}+d_c$.

\begin{theorem} \label{reducec}
	Let $\{A(\la),B(\la),C(\la),D(\la)\}$ be a linear system quadruple, with $A(\la)\in \mathbb{C}[\la]^{d\times d}$ regular, realizing the rational matrix 
	$R(\la):=C(\la)A(\la)^{-1}B(\la)+D(\la) \in \mathbb{C}(\lambda)^{m\times n}.$ Then there exist unitary transformations $U,V \in\mathbb{C}^{d\times d}$ and $\widetilde W\in  \mathbb{C}^{(d+n)\times (d+n)}$ such that the following identity holds
	\begin{equation*}
		\left[\begin{array}{cc} U  \;&\; 0 \\  0 \;&\; I_m  \end{array}\right] \!
		\left[\begin{array}{cc} A(\la) \;&\; - B(\la) \\  C(\la) \;&\; D(\la)  \end{array}\right] \!
		\left[\begin{array}{cc} V  \;&\;  0 \\  0 \;&\;  I _n \end{array}\right] \! \widetilde W
		=  \! \left[\begin{array}{ccc}  X_{\overline c}(\la) \;&\; 0 \;&\; 0 \\  Y_{\overline c}(\la) \;&\; A_c(\la) \;&\;  -B_c(\la) \\ Z_{\overline c}(\la) \;&\; C_c(\la) \;&\;  D_c(\la)  \end{array}\right] ,
	\end{equation*}
	where $\widetilde W$ is of the form $\widetilde W:= \left[\begin{array}{ccc} \widetilde W_{11} \;&\; 0 \;&\; \widetilde W_{13} \\ 0 \;&\; I_{d_c} \;&\; 0 \\ \widetilde W_{31} \;&\; 0 \;&\; \widetilde W_{33}  \end{array}\right]\in  \mathbb{C}^{(d_{\overline c}+d_c +n)\times (d_{\overline c} +d_c +n)},$ $d_{\overline c}$ is the number of (finite and infinite) eigenvalues of $\left[\begin{array}{cc}A(\la) \;&\; -B(\la) \end{array}\right],$ and $X_{\overline c}(\la)\in 
	\mathbb{C}[\la]^{d_{\overline c}\times d_{\overline c}}$ is a regular pencil. Moreover,
	\begin{itemize}
		\item[a)] the eigenvalues of $\left[\begin{array}{cc}A(\la) \;&\; -B(\la) \end{array}\right]$ are the eigenvalues of $X_{\overline c}(\la)$,
		\item[b)] $\left[\begin{array}{cc}A_c(\la) \;&\; -B_c(\la) \end{array}\right] \in  \mathbb{C}[\la]^{d_c\times (d_c +n)}$ has no (finite or infinite) eigenvalues, 
		\item[c)] the quadruple  $\{A_c(\la),B_c(\la),C_c(\la),D_c(\la)\}$ is a realization of the transfer function $R_c(\la):=R(\la)\widetilde W_{33}$,
		with $\widetilde W_{33}\in \mathbb{C}^{n\times n}$ invertible, and
		\item[d)] if $\left[\begin{array}{cc}A(\la) \\ C(\la) \end{array}\right]$ has no finite or infinite eigenvalues, then $\left[\begin{array}{cc}A_c(\la) \\ C_c(\la) \end{array}\right]$ also has no finite or infinite eigenvalues. 
	\end{itemize} 
\end{theorem}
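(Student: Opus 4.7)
The plan is to assemble the four-part identity from the explicit three-step construction presented just above the theorem, and to verify claims (a)--(d) using it. The existence of $U$ and $W$ follows from the generalized Schur decomposition (\ref{schurdec}) of the singular pencil $[A(\la)\;\, -B(\la)]$ used in Step~1, so that the regular part $X(\la)\in\mathbb{C}[\la]^{r\times r}$ sits in the leading block and $[\widehat A(\la)\;\, -\widehat B(\la)]$ has no finite or infinite eigenvalues. Because $A(\la)$ is regular, the first row block $[W_{11}\;\, W_{12}]$ of $W$ has full row rank, which allows the construction of the unitary $V$ compressing it to $[\widehat W_{11}\;\, 0]$ with $\widehat W_{11}$ invertible. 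Let $\widetilde W$ be the unitary built in Step~2 to annihilate $W_{13}$; by construction its middle block column is $\bmx 0 \\ I_{d_c} \\ 0 \emx$, so $\widetilde W$ has the displayed block shape. With $d_{\overline c}:=r$ and $X_{\overline c}(\la):=X(\la)\widehat W_{11}$, the identity asserted in the theorem is exactly what Step~2 outputs.

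Items (a) and (b) are now almost immediate. Item (a) follows from (\ref{schurdec}): the generalized Schur form collects all finite and infinite eigenvalues of $[A(\la)\;\, -B(\la)]$ into $X(\la)$, and right-multiplication by the invertible constant $\widehat W_{11}$ preserves them. For item (b), the identity derived at the end of Step~3 shows that $[A_c(\la)\;\, -B_c(\la)] = [\widehat A(\la)\;\, -\widehat B(\la)]\,\widetilde V$ for a unitary $\widetilde V$, so $[A_c(\la)\;\, -B_c(\la)]$ inherits from $[\widehat A(\la)\;\, -\widehat B(\la)]$ the absence of finite and infinite eigenvalues.

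For item (c), the Schur-complement computation of Step~2 directly gives $R_c(\la)=R(\la)\widetilde W_{33}$, so it suffices to verify that $\widetilde W_{33}$ is $n\times n$ and invertible. This follows from the identity
\[
\bmx E \\ I_n \emx \widetilde W_{33} = \bmx \widetilde W_{13} \\ \widetilde W_{33} \emx,
\]
which expresses that both column bases span the right null space of $[\widehat W_{11}\;\, W_{13}]$; since $\widehat W_{11}$ is invertible, this null space has dimension exactly $n$, forcing $\widetilde W_{33}$ to be a square invertible $n\times n$ matrix.

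The step I expect to be the main obstacle is item (d), because the column transformation $\mathrm{diag}(V,I_n)\widetilde W$ a priori mixes state and input coordinates. The key observation is that the middle block column of $\widetilde W$ equals $\bmx 0 \\ I_{d_c} \\ 0 \emx$, so the middle block column of $\mathrm{diag}(V,I_n)\widetilde W$ equals $\bmx V_2 \\ 0 \emx$, where $V_2\in\mathbb{C}^{d\times d_c}$ denotes the last $d_c$ columns of $V$. Reading the middle block column of the assembled identity therefore gives
\[
\bmx 0 \\ A_c(\la) \\ C_c(\la) \emx = \bmx U & 0 \\ 0 & I_m \emx \bmx A(\la) \\ C(\la) \emx V_2.
\]
Since $V_2$ has orthonormal columns (as a submatrix of the unitary $V$) and $\mathrm{diag}(U,I_m)$ is unitary, this map preserves full column rank at every finite $\la$ and at infinity. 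Hence the hypothesis that $\bmx A(\la) \\ C(\la) \emx$ has no finite or infinite eigenvalues forces the same property on $\bmx A_c(\la) \\ C_c(\la) \emx$, establishing (d).
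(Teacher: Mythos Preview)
Your proof is correct and follows essentially the same route as the paper: you invoke the three-step construction preceding the theorem for the decomposition and for items (a)--(c), and your argument for (d) via the middle block column of $\mathrm{diag}(V,I_n)\widetilde W$ is equivalent to the paper's use of the identity \eqref{intermediate} restricted to the last $d_c$ columns of $V$. One cosmetic point: after applying $\widetilde W$ the $(1,1)$ block becomes $X(\la)$ rather than $X(\la)\widehat W_{11}$ (since $[\widehat W_{11}\;0\;W_{13}]\widetilde W=[I\;0\;0]$), but as $\widehat W_{11}$ is constant and invertible this mislabel does not affect any of your conclusions.
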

\begin{remark}\rm Notice that conditions $b)$ and $d)$ in Theorem \ref{reducec} imply that the system quadruple $\{A_c(\la),B_c(\la),C_c(\la),D_c(\la)\}$ is strongly minimal. 
\end{remark}

\begin{proof} The decomposition and the three properties $a)$, $b)$ and $c)$ were shown in the discussion above. The only part that remains to be proven is property $d)$. This follows from the identity \eqref{step1}, which yields 
	$$  \left[\begin{array}{cc} U \;&\; 0 \\  0 \;&\; I_{m} \end{array}\right] \left[\begin{array}{cc}  A(\la) \\  C(\la) \end{array}\right] V = 
	\left[\begin{array}{cc} X(\la) \widehat W_{11} \;&\; 0 \\  \widetilde Y(\la) \;&\; A_c(\la) \\ \widetilde Z(\la) \;&\; C_c(\la)  \end{array}\right].
	$$
	This clearly implies that if $\left[\begin{array}{cc}A(\la) \\ C(\la) \end{array}\right]$ has full rank for all $\la$ (including infinity),
	then so does  $\left[\begin{array}{cc}A_c(\la) \\ C_c(\la) \end{array}\right]$. 
\end{proof}

We state below a dual theorem that constructs, from an arbitrary linear system quadruple $\{A(\la),B(\la),C(\la),D(\la)\},$ a subsystem $\{A_o(\la),B_o(\la),C_o(\la),D_o(\la)\}$ where $\left[\begin{array}{cc}A_o(\la) \\ C_o(\la) \end{array}\right]$ has no finite or infinite eigenvalues. Its proof is obtained by applying the previous theorem on the transposed system $\{A^T(\la),C^T(\la),B^T(\la),D^T(\la)\}$ and then transposing back the result.
\begin{theorem} \label{reduceo}
	Let $\{A(\la),B(\la),C(\la),D(\la)\}$ be a linear system quadruple, with $A(\la)\in \mathbb{C}[\la]^{d\times d}$ regular, realizing the rational matrix 
	$R(\la):=C(\la)A(\la)^{-1}B(\la)+D(\la) \in \mathbb{C}(\lambda)^{m\times n}.$ Then there exist unitary transformations $U,V \in\mathbb{C}^{d\times d}$ and $\widetilde W\in  \mathbb{C}^{(d+m)\times (d+m)}$ such that the following identity holds
	\begin{equation*}  \widetilde W
		\left[\begin{array}{cc} U  \;&\; 0 \\  0 \;&\;  I_m  \end{array}\right] \!
		\left[\begin{array}{cc} A(\la) \;&\; - B(\la) \\  C(\la) \;&\; D(\la)  \end{array}\right] \!
		\left[\begin{array}{cc} V \;&\; 0 \\  0 \;&\; I _n \end{array}\right] \! 
		=  \! \left[\begin{array}{ccc}  X_{\overline o}(\la) \;&\; Y_{\overline o}(\la) \;&\; Z_{\overline o}(\la) \\  0 \;&\; A_o(\la) \;&\; -B_o(\la) \\  0 \;&\; C_o(\la) \;&\;  D_o(\la)  \end{array}\right],
	\end{equation*} 
	where $\widetilde W$ is of the form $\widetilde W:=\left[\begin{array}{ccc} \widetilde W_{11} \;&\; 0 \;&\; \widetilde W_{13} \\ 0 \;&\; I_{d_o} \;&\; 0 \\ \widetilde W_{31} \;&\; 0 \;&\; \widetilde W_{33}  \end{array}\right]\in  \mathbb{C}^{(d_{\overline o} +d_o+m)\times (d_{\overline o} +d_o+m)},$ $d_{\overline o}$ is the number of (finite and infinite) eigenvalues of $\left[\begin{array}{cc}A(\la) \\ C(\la) \end{array}\right]$, and $X_{\overline o}(\la)\in 
	\mathbb{C}[\la]^{d_{\overline o}\times d_{\overline o}}$ is a regular pencil. Moreover,
	\begin{itemize}
		\item[a)] the eigenvalues of $\left[\begin{array}{cc}A(\la) \\ C(\la) \end{array}\right]$ are the eigenvalues of $X_{\overline o}(\la),$ 
		\item[b)] $\left[\begin{array}{cc}A_o(\la) \\ C_o(\la) \end{array}\right] \in  \mathbb{C}[\la]^{(d_o+m)\times d_o}$ has no (finite or infinite) eigenvalues, 
		\item[c)] the quadruple  $\{A_o(\la),B_o(\la),C_o(\la),D_o(\la)\}$ is a realization of the transfer function $R_o(\la):=\widetilde W_{33}R(\la)$, with $\widetilde W_{33} \in  \mathbb{C}^{m\times m}$ invertible, and
		\item[d)] if $\left[\begin{array}{cc}A(\la) \;&\; -B(\la) \end{array}\right]$ has no finite or infinite eigenvalues then $\left[\begin{array}{cc}A_o(\la) \;&\; -B_o(\la) \end{array}\right]$ also has no finite or infinite eigenvalues. 
	\end{itemize} 
\end{theorem}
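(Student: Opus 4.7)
The plan is to derive Theorem \ref{reduceo} directly from Theorem \ref{reducec} via transposition. First I would introduce the transposed system quadruple
$\{A^T(\la),\, -C^T(\la),\, -B^T(\la),\, D^T(\la)\}$,
with signs arranged so that its associated system matrix (under the paper's $-B$ convention) is precisely $S(\la)^T$. A direct check shows that its transfer function equals $R^T(\la)$, its \emph{controllability} pencil is $[A^T(\la),\, C^T(\la)]$, and its \emph{observability} pencil is $\bigl[\begin{smallmatrix}A^T(\la)\\ -B^T(\la)\end{smallmatrix}\bigr]$. Thus the controllability pencil of the transposed system is the transpose of the observability pencil of the original, so they share the same $d_{\bar o}$ finite and infinite eigenvalues; symmetrically, the observability pencil of the transposed system is the transpose of the controllability pencil of the original.

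Next I would invoke Theorem \ref{reducec} on this transposed quadruple. This yields unitaries $\wt U,\wt V \in \mathbb{C}^{d\times d}$ and a matrix $\wh W \in \mathbb{C}^{(d+m)\times(d+m)}$ with the prescribed block-zero structure (and invertible bottom-right block $\wh W_{33}$) such that
\begin{equation*}
\begin{bmatrix} \wt U & 0 \\ 0 & I_n \end{bmatrix}\! S(\la)^T \!\begin{bmatrix} \wt V & 0 \\ 0 & I_m \end{bmatrix}\! \wh W \;=\; \begin{bmatrix} X_{\bar c}(\la) & 0 & 0 \\ Y_{\bar c}(\la) & \wt A_c(\la) & -\wt B_c(\la) \\ Z_{\bar c}(\la) & \wt C_c(\la) & \wt D_c(\la) \end{bmatrix},
\end{equation*}
with $X_{\bar c}(\la) \in \mathbb{C}[\la]^{d_{\bar o}\times d_{\bar o}}$ regular and carrying the $d_{\bar o}$ eigenvalues of $[A^T,\, C^T]$, the pencil $[\wt A_c,\, -\wt B_c]$ free of finite and infinite eigenvalues, and the reduced sub-quadruple realizing the rational matrix $R^T(\la)\wh W_{33}$.

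Finally I would transpose the entire identity. Transposition exchanges left and right multiplication (so $\wh W^T$ ends up on the left), swaps the roles of $\wt U$ and $\wt V$, sends the block lower-triangular shape to the upper-triangular one demanded by Theorem \ref{reduceo}, and preserves the block-zero pattern of $\wh W$ (only the off-diagonal blocks get transposed and swapped between position $(1,3)$ and position $(3,1)$). Setting $\wt W := \wh W^T$, $X_{\bar o}(\la) := X_{\bar c}^T(\la)$, $A_o(\la) := \wt A_c^T(\la)$, $B_o(\la) := -\wt C_c^T(\la)$, $C_o(\la) := -\wt B_c^T(\la)$, $D_o(\la) := \wt D_c^T(\la)$, and $\wt W_{33} := \wh W_{33}^T$ (invertible), the transposed identity is exactly the one asserted in Theorem \ref{reduceo}. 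Properties $a)$ and $b)$ are immediate from their counterparts in Theorem \ref{reducec}, since transposition preserves pencil regularity and the finite/infinite eigenvalue structure. For property $c)$, transposing the relation between $R^T(\la)\wh W_{33}$ and the Schur complement of the transposed reduced pencil yields $R_o(\la) = \wt W_{33} R(\la)$. For property $d)$, the hypothesis that $[A(\la),\, -B(\la)]$ has no eigenvalues is equivalent to its transpose $\bigl[\begin{smallmatrix}A^T\\ -B^T\end{smallmatrix}\bigr]$ --- the observability pencil of the transposed system --- having none, so property $d)$ of Theorem \ref{reducec} applied there ensures $\bigl[\begin{smallmatrix}\wt A_c\\ \wt C_c\end{smallmatrix}\bigr]$ is free of eigenvalues, and transposing once more delivers the claim for $[A_o,\, -B_o]$. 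The only real obstacle is bookkeeping --- tracking the sign flips and left/right orientations introduced by transposition --- while the substantive content is borrowed wholesale from Theorem \ref{reducec}.
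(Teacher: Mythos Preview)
Your proposal is correct and follows exactly the approach the paper indicates: the paper states that the proof ``is obtained by applying the previous theorem on the transposed system $\{A^T(\la),C^T(\la),B^T(\la),D^T(\la)\}$ and then transposing back the result,'' and you have carried out precisely this duality argument, with more careful bookkeeping of the sign conventions than the paper itself provides.
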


In order to extract from the system quadruple $\{A(\la),B(\la),C(\la),D(\la)\}$ a subsystem  $\{A_{co}(\la),B_{co}(\la),C_{co}(\la),D_{co}(\la)\}$ that is both strongly E-controllable and E-observable (and hence also strongly minimal), we only need to apply 
the above two theorems one after the other. The resulting subsystem would then be a realization of the transfer function 
$R_{co}=C_{co}(\la)A_{co}(\la)^{-1}B_{co}(\la)+D_{co}(\la) = W_\ell R(\lambda) W_r \in \mathbb{C}(\lambda)^{m\times n}$.
Since the transfer function was changed only by left and right transformations that are constant and invertible, the left and right nullspace will be transformed by these invertible transformations, but their minimal indices will be unchanged.

\section{Computational aspects} \label{Computation}

In this section we give a more ``algorithmic'' description of the procedure described in Section \ref{Reducing} to reduce a given 
system quadruple $\{A(\la),B(\la),C(\la),D(\la)\}$ to a strongly E-controllable quadruple  $\{A_c(\la),B_c(\la),C_c(\la),D_c(\la)\}$ of smaller size.
We describe the essence of the three steps that were discussed in that section.

\bigskip

\noindent
{\bf Step 1} : Compute the staircase reduction of the submatrix $\left[\begin{array}{cc} A(\la) \;&\; - B(\la)  \end{array}\right]$
$$ U 
\left[\begin{array}{cc} A(\la) \;&\; - B(\la)  \end{array}\right]W^\ast=
\left[\begin{array}{c|cc}  X(\la)\;&\; 0 \;&\;0  \\ \hline Y(\la) \;&\; \widehat A(\la) \;&\;  -\widehat B(\la)  \end{array}\right].
$$
{\bf Step 2} : Compute the unitary matrices $V$ and $\widetilde W$ to compress the first block row of $W$
$$  \left[\begin{array}{ccc} W_{11} \;&\; W_{12} \;&\; W_{13} \end{array}\right]\left[\begin{array}{cc} V \;&\; 0 \\  0 \;&\; I_{n} \end{array}\right]  \left[\begin{array}{ccc} \widetilde W_{11} \;&\; 0 \;&\; \widetilde W_{13} \\
0 \;&\; I_{d-r} \;&\; 0 \\  \widetilde W_{31} \;&\; 0 \;&\; \widetilde W_{33} \end{array}\right] =\left[\begin{array}{ccc} I_{r} \;&\; 0 \;&\; 0 \end{array}\right],
$$
where $V$ does the compression $\begin{bmatrix}W_{11} \;&\; W_{12} \end{bmatrix}V=\begin{bmatrix}\widetilde W_{11}^{*} \;&\; 0\end{bmatrix}$ of the first two blocks and $\widetilde W$ does the further reduction of the first block row to $\begin{bmatrix}I_r \;&\; 0 \;&\; 0\end{bmatrix}$.
\\
{\bf Step 3} : Display the uncontrollable part $X(\la)$ using the transformations $U$, $V$ and $\widetilde W$
$$ \left[\begin{array}{cc} U \;&\; 0 \\  0 \;&\; I_{m} \end{array}\right] \left[\begin{array}{cc}  A(\la) \;&\; -B(\la) \\  C(\la) \;&\; D(\la) \end{array}\right] 
\left[\begin{array}{cc} V \;&\; 0 \\  0 \;&\; I_{n} \end{array}\right]\widetilde W=  \left[\begin{array}{ccc} X_{\overline{c}}(\la) \;&\; 0 \;&\; 0 \\ \times  \;&\;  A_c(\la) \;&\; -B_c(\la) 
\\ \times  \;&\;  C_c(\la) \;&\; D_c(\la)  \end{array}\right],
$$
where we have used the notations introduced in Section \ref{Reducing}, and the resulting $\times$ entries are of no interest because they do not contribute to the transfer function $R_c(\la):=C_c(\la)A_c(\la)^{-1}B_c(\la)+D_c(\la)$.

 The computational complexity of these three steps is cubic in the dimensions of the matrices that are involved, provided that the staircase algorithm is implemented in an efficient manner \cite{BeeV}. But it is also important to point out that the reduction procedure to extract a strongly minimal linear system matrix from an arbitrary one, can be  done with unitary transformations only, and that only one staircase reduction is needed when one knows that 
the pencil $\left[\begin{array}{cc} A(\la) \;&\; - B(\la)  \end{array}\right]$ has normal rank equal to its number of rows. Indeed, this pencil then does not have 
any left null space or left minimal indices and only the regular part has to be separated from the right null space structure. This can be obtained by performing one staircase reduction on the {\em rotated} pencil $\left[\begin{array}{cc} \widetilde A(\mu) \;&\; - \widetilde B(\mu)  \end{array}\right]$, where the coefficient matrices
$$  \left[\begin{array}{cc} \widetilde A_0 \\ \widetilde A_1  \end{array}\right]=  \left[\begin{array}{cc} c I \;&\; s I \\ -sI \;&\; cI \end{array}\right] \left[\begin{array}{cc} A_0 \\ A_1  \end{array}\right], \quad \left[\begin{array}{cc} \widetilde B_0 \\ \widetilde B_1  \end{array}\right]=  \left[\begin{array}{cc} c I \;&\; s I \\ -sI \;&\; cI \end{array}\right] \left[\begin{array}{cc} B_0 \\ B_1  \end{array}\right], \quad c^2+s^2=1
$$ 
correspond to a change of variable $\lambda= (c\mu -s)/(s\mu +c)$. If one now chooses the rotation such that the rotated pencil has no eigenvalues at $\mu=\infty$, then only the finite spectrum has to be separated from the right minimal indices, which can be done with one staircase reduction \cite{Van79b}.

\section{Numerical results}\label{experiments}

	We illustrate the results of this paper with a polynomial example and a rational one.
	
	\begin{example}	{Example 1} We consider the $2\times 2$ polynomial matrix $P(\la)=\diag{e_1(\la),e_5(\la)}$, where $e_5(\lambda)$ is a polynomial of degree 5 with coefficients $[ 9.6367e-01 \;\; -5.4026e-07 \;\;  2.6333e-01 \;\; -1.1101e-04 \;\; -2.9955e-04 \;\; 4.4650e-02]$, ordered by descending powers of $\lambda$, and $e_1(\lambda)$ is a polynomial of degree 1 with coefficients $[-2.1886e-03 \;\;-1.0000e+00]$, that were randomly chosen. Expanding this fifth order polynomial matrix as 
		$$ P(\lambda)=P_0+ P_1\lambda + \cdots + P_5 \lambda^5,$$
		a linear system matrix $S_P(\la)$ of $P(\la)$ is given by the following $10\times 10$ pencil:
		$$ S_P(\lambda) =  \left[\begin{array}{cccc|c} I_2 \;&\; -\lambda I_2 & & & P_1 \\ & I_2 \;&\; -\lambda I_2 & & P_2 \\
		& & I_2 \;&\; -\lambda I_2 & P_3 \\ & & &  I_2 \;&\; P_4 + \lambda P_5 \\ \hline
		-\lambda I_2 & & & & P_0
		\end{array}\right].
		$$
		The six finite Smith zeros of $P(\la)$ are clearly those of the scalar polynomials $e_1(\la)$ and $e_5(\la)$. These are also the finite zeros of $S_P(\la),$ since $S_P(\la)$ is minimal. However, $S_P(\la)$ is not strongly minimal if $P_5$ is singular and, in fact, it has 4 eigenvalues at infinity (in the sense of \cite{GohbergLancasterRodman09}). But in the McMillan sense, $P(\la)$ has 
		{\em no} infinite zeros. The deflation procedure that we derived in this paper precisely gets rid of the {\em extraneous} infinite eigenvalues of $S_P(\la)$. The numerical tests show that the sensitivity of the true McMillan zeros also can benefit from this.
		
		\bigskip
		
		In this example we compare the roots computed by four different methods:
		\begin{enumerate}
			\item computing the roots of the scalar polynomials and appending four $\infty$ roots,
			\item computing the generalized eigenvalues of $S_P(\la)$,
			\item computing the roots of $QS_P(\la)Z$ for random orthogonal matrices $Q$ and $Z$,
			\item computing the roots of the {\em minimal} pencil obtained by our method.
		\end{enumerate}
		
		The first column are the so-called ``correct'' eigenvalues $\la_i$, corresponding to the first method, the next three columns are the corresponding errors $\delta^{(k)}_i := |\la_i-\hat \la^{(k)}_i|$, $k=2,3,4$, of the above three methods\footnote{An error $\delta^{(k)}_i$ is $\text{NaN}$ when it is the indeterminate form $\text{Inf} - \text{Inf}$. However, some of the eigenvalues at $\infty$ are computed as a large but finite number and, then, the corresponding error is $\text{Inf}$.}. The {\em extraneous} eigenvalues that are deflated in our approach are put between brackets. 
		
		\vspace{-0.3cm}
		\begin{table}[ht] 
				\begin{center}	
						\begin{tabular}{c|ccc} 
					$\la_i$ \;&\; $\delta^{(2)}_i$ \;&\; $\delta^{(3)}_i$ \;&\; $\delta^{(4)}_i$ \\ \hline
				-4.5811e-01                         \;&\;   2.7756e-16   \;&\;  4.4409e-16 \;&\;   1.1102e-16 \\
				
				3.5076e-01 + 3.5785e-01i    \;&\;  9.5020e-16   \;&\;  1.1102e-16  \;&\;  4.0030e-16 \\
				
			    3.5076e-01 - 3.5785e-01i \;&\;  9.5020e-16   \;&\;   1.1102e-16  \;&\; 4.0030e-16 \\
			    
				-1.2170e-01 + 6.2287e-01i    \;&\;  6.7589e-16   \;&\;  7.8945e-16 \;&\;   2.2248e-16\\
				
					-1.2170e-01 - 6.2287e-01i    \;&\;  6.7589e-16  \;&\; 7.8945e-16    \;&\;  2.2248e-16\\
					
					-4.5691e+02                   \;&\;  2.9559e-12   \;&\; 2.7285e-12  \;&\; 5.6843e-14 \\
					
					Inf   &    NaN    &   NaN   &  (Inf)    \\
					Inf   &    NaN    &   NaN    &  (Inf)    \\
					Inf   &    NaN    &   NaN   &  (Inf)  \\
					Inf   &    NaN    &   NaN   &  (Inf)  \\
				\end{tabular} 	\end{center}	
			
				\caption{The correct generalized $\la_i$ and the corresponding accuracies $\delta^{k}_i$ for the three different calculations} 
				
		\end{table} 
      	\vspace{-0.4cm}
	

We notice that for the largest finite eigenvalue of the order of $10^2$ the $QZ$ algorithm applied to $S_P(\la)$ gets 14 digits of relative accuracy but, when deflating the four uncontrollable eigenvalues at $\infty$, our method recovers a relative accuracy of 16 digits.
	\end{example}

	\begin{example}	{Example 2} The second example is the rational matrix $R(\la)$ in \eqref{Ex1} with $c=1$.
		$$ R(\la) = \left[\begin{array}{cc} e_5(\lambda) \;&\; 0 \\ 1/\la \;&\; e_1(\lambda) \end{array}\right]=P_0+ P_1\lambda + \cdots + P_5 \lambda^5 +  \left[\begin{array}{cc} 0 \;&\; 0 \\ 1/\la \;&\; 0 \end{array}\right],$$
		by using the notation of the example above. In this case, $e_5(\lambda)$ has the row vector $[  4.7865e-02 \;\;
		1.4279e-04 \;\;
		2.4361e-03 \;\;
		-1.5336e-02 \;\;
		-9.9155e-01 \;\;
		1.1948e-01 ]$ as coefficients, and $e_1(\lambda)$ has the row vector $[6.5250e-03 \;\;
		9.9997e-01 ]$. We consider the $12 \times 12$ linear system matrix
		$$ S_R(\lambda) =  \left[\begin{array}{ccccc|c} \lambda I_2 -A & & & & & -B \\ & I_2 \;&\; -\lambda I_2 & & & P_1 \\  & & I_2 \;&\; -\lambda I_2 & & P_2 \\
		& & & I_2 \;&\; -\lambda I_2 & P_3 \\ & & & &  I_2 \;&\; P_4 + \lambda P_5 \\ \hline
		C & -\lambda I_2 & & & & P_0
		\end{array}\right],
		$$
		where 
		$$ A= \left[\begin{array}{cc} 0 \;&\; 0 \\ 1 \;&\; 0 \end{array}\right], \quad B= \left[\begin{array}{cc} 0 \;&\; 0 \\  1 \;&\; 0 \end{array}\right]
		\quad C= \left[\begin{array}{cc} 0 \;&\; 0 \\ 0 \;&\; 1 \end{array}\right]$$
		is a non-minimal realization of the strictly proper rational function $1/\la$. In fact, the matrix $A$ in the realization triple $(A,B,C)$ has two eigenvalues at $\la=0,$ of which one is uncontrollable since $1/\la$ only has a pole at $0$ of order $1.$
		This is an artificial example since we could have realized the strictly proper part by using a minimal triple $(A,B,C)$ by removing the uncontrollable 
		eigenvalue, but this is precisely what our reduction procedure does simultaneously for finite and infinite uncontrollable eigenvalues. 
		The quantities given in the following table are defined as in the previous example, except that we added two roots at $0$ corresponding to the exact eigenvalues.
		
		\vspace{0.4cm}
		\begin{table}[ht] 
				\begin{center}	\begin{tabular}{c|ccc} 
					$\la_i$ \;&\;$\delta^{(2)}_i$ \;&\;  $\delta^{(3)}_i$ \;&\; $\delta^{(4)}_i$ \\ \hline
					0                                        \;&\;  0                 \;&\; 8.1752e-09  \;&\;  (4.5874e-16) \\ 
					0                                        \;&\; 3.6752e-18   \;&\; 8.1752e-09  \;&\; 5.3729e-16 \\
				1.2028e-01                             \;&\;  1.8041e-16   \;&\; 9.7145e-17   \;&\;  9.7145e-17 \\
				2.1135e+00                            \;&\;  1.7764e-15   \;&\; 2.6645e-15 \;&\; 1.3323e-15 \\
	       	-2.1404e+00                              \;&\; 1.7764e-15   \;&\; 2.2204e-15  \;&\; 8.8818e-16\\
			-4.8180e-02 + 2.1412e+00i        \;&\; 2.3216e-15      \;&\; 1.7990e-15   \;&\; 4.0614e-15 \\
			-4.8180e-02 - 2.1412e+00i         \;&\; 2.3216e-15    \;&\; 1.7990e-15  \;&\; 4.0614e-15\\
			-1.5325e+02                               \;&\; 2.5580e-13   \;&\; 1.5321e-07   \;&\; 5.6843e-14\\
					Inf   &    NaN    &   Inf   &  (Inf)    \\
					Inf   &    NaN    &   Inf   &  (Inf)  \\
					Inf   &    NaN    &   NaN   &  (NaN)  \\
					Inf   &    NaN    &   NaN   &  (NaN)  \\
				\end{tabular}	\end{center}
	\caption{The correct generalized $\la_i$ and the corresponding accuracies $\delta^{k}_i$ for the three different calculations}
		\end{table}
		\vspace{-0.4cm}
	
		In this example the $QZ$ algorithm applied to $S_R(\la)$ recovers well all generalized eigenvalues. When applying the $QZ$ algorithm to an orthogonally equivalent pencil $QS_R(\la)Z$, the Jordan block at 0 gets perturbed to two roots of the order of the square root of the machine precision, which can be expected. But when deflating the uncontrollable eigenvalue at 0, this Jordan block is reduced to a single eigenvalue and part of the accuracy gets restored.
	\end{example}
\vspace{-0.1cm}
These two examples show that deflating uncontrollable eigenvalues may improve the sensitivity of the remaining eigenvalues which
may improve the accuracy of their computation.
  
\section{Conclusion} \label{Conclusion}

In this paper we looked at quadruple realizations $\{A(\la),B(\la),C(\la),D(\la)\}$ for a given rational transfer function 
$R(\la)=C(\la)A(\la)^{-1}B(\la)+D(\la)$, where  the matrices $A(\la), B(\la), C(\la)$ and $D(\la)$ are pencils, and where 
$A(\la)$ is assumed to be regular. We showed that under certain minimality assumptions on this quadruple, the poles, zeros 
and left and right null space structure of the rational matrix $R(\la)$ can be recovered from the generalized eigenstructure of 
two block pencils constructed from the quadruple. We also showed how to obtain such a minimal quadruple from a non-minimal one, by applying a reduction procedure that is based on the staircase algorithm. These results extend those previously obtained for generalized state space systems and polynomial matrices. 

\begin{acknowledgement} We would like to thank the anonymous reviewer whose helpful comments and suggestions have greatly improved this manuscript. The first author was supported by ``Ministerio de Econom\'ia, Industria y Competitividad (MINECO)'' of Spain and ``Fondo Europeo de Desarrollo Regional (FEDER)'' of EU through grants MTM2015-65798-P and MTM2017-90682-REDT. The second author was funded by the “contrato predoctoral” BES-2016-076744 of MINECO. This work was developed while the third author held a ``Chair of Excellence UC3M - Banco de Santander'' at Universidad Carlos III de Madrid in the academic year 2017-2018. 
\end{acknowledgement}

\bibliographystyle{plain}

\end{document}